\documentclass{amsart}

\usepackage{amsmath}
\usepackage{amssymb}
\usepackage{amsthm}

\usepackage[dvipsnames]{xcolor}
\usepackage[colorlinks,linkcolor=MidnightBlue,citecolor=MidnightBlue]{hyperref}

\usepackage[margin=1.0in]{geometry}               
\geometry{letterpaper}

\usepackage{tikz}
\tikzstyle{ball} = [circle,shading=ball, ball color=black,
    minimum size=1mm,inner sep=1.3pt]
    \tikzstyle{dot} = [circle,fill=black,
    minimum size=3.5pt,inner sep=0pt]
\tikzstyle{miniball} = [circle,shading=ball, ball color=black,
    minimum size=1mm,inner sep=0.5pt]
\usepackage{tikz-cd}

\usepackage{enumitem}
%\setlist[enumerate]{leftmargin=*}

%\usepackage{needspace}  % <-- to prevent bad pagebreak over environment titles
% Use:  \Needspace{3\baselineskip} before opening an environment

%\usepackage{url}

\newtheorem{thm}{Theorem}[section]
\newtheorem{lemma}[thm]{Lemma}

\newtheorem{cor}[thm]{Corollary}
\newtheorem{prop}[thm]{Proposition}

% Theorem environments with roman font (use lower-case version in body
% of text, e.g., \begin{example} rather than \begin{Example})
\newtheorem{Definition}[thm]{Definition}
\newenvironment{defn}
  {\begin{Definition}\rm}{\end{Definition}}

\newtheorem{Example}[thm]{Example}
\newenvironment{example}
  {\begin{Example}\rm}{\end{Example}}

\newtheorem{Remark}[thm]{Remark}

\DeclareMathOperator{\av}{av}
\DeclareMathOperator{\bound}{Bound}
\DeclareMathOperator{\cb}{CB}
\DeclareMathOperator{\wt}{wt}
\DeclareMathOperator{\rk}{rk}
\DeclareMathOperator{\gr}{Gr}

\newcommand{\Z}{\mathbb{Z}}

\newcommand{\tS}{\tilde{S}_n}
\newcommand{\tSk}{\tilde{S}_n^k}
\newcommand{\tSz}{\tilde{S}_n^{0}}

\title{Counting weighted maximal chains in the circular Bruhat order}
\author{Gopal Goel}
\address{M.I.T., Cambridge, MA}
\email{gopalkg@mit.edu}
\author{Olivia McGough}
\address{Reed College, Portland, OR}
\email{mcgougho@reed.edu}
\thanks{The second author's work is partially supported by the Reed College Robert \& Louise
Rosenbaum Fellowship.}
\author{David Perkinson}
\address{Reed College, Portland, OR}
\email{davidp@reed.edu}

\subjclass[2010]{primary 05E15, secondary 05A15}

\keywords{circular Bruhat order, $k$-Bruhat order, positroid, totally
nonnegative Grassmannian}

\begin{document}

\begin{abstract} 
  The totally nonnegative Grassmannian $\gr(k,n)_{\geq0}$ is the subset of the
  real Grassmannian~$\gr(k,n)$ consisting of points with all nonnegative Pl\"ucker
  coordinates.  The circular Bruhat order is a poset isomorphic to the face
  poset of Postnikov's (\cite{Postnikov}) positroid cell decomposition
  of~$\gr_{\geq0}(k,n)$.  We provide a closed formula for the sum of 
  its weighted chains in the spirit of Stembridge (\cite{Stembridge}).
\end{abstract}

\maketitle

\section{Introduction.}\label{sect:intro}
Let~$S_n$ be the symmetric group on $[n]:=\left\{1,\ldots,n\right\}$.  An
\emph{inversion} of~$\pi\in S_n$ is a pair~$i,j\in[n]$ such that~$i<j$ and
$\pi(i)>\pi(j)$.  The number of inversions of~$\pi$ is its \emph{length},
denoted $\ell(\pi)$.  The \emph{Bruhat order} on~$S_n$ is a partial ordering
on~$S_n$, graded by length. Its cover relations have the form~$\pi
s_{ij}\lessdot\pi$ where~$s_{ij}:=(i,j)$ is a transposition such that
$\ell(\pi)=\ell(\pi s_{ij})+1$.  The maximal element of the Bruhat order,
written in row notation, is $\pi_{\mathrm{top}}=[n,n-1,\ldots,1]$ of
length~$r:=\binom{n}{2}$ and the smallest element is the identity
permutation~$\mathrm{id}=[1,2,\ldots,n]$ of length~$0$.  In the Bruhat order,
each maximal chain has the
form~$\mathrm{id}=\pi_0\lessdot\pi_1\lessdot\ldots\lessdot\pi_{r}=\pi_{\mathrm{top}}$.
Let $\alpha_1,\ldots,\alpha_n$ be indeterminates. Define the \emph{weight} of a
covering~$\pi s_{ij}\lessdot \pi$ with~$i<j$ to
be~$\alpha_{i}+\alpha_{i+1}+\cdots+\alpha_{j-1}$, and then define the weight of
a maximal chain to be the product of the weights of its cover relations.  In a
result that extends to all Weyl groups, Stembridge (\cite{Stembridge}) shows
that the sum of the weights of the maximal chains is
\[
  \frac{\binom{n}{2}!}{1^{n-1}2^{n-2}\cdots(n-1)^{1}}\prod_{1\leq i<j\leq
  n}(\alpha_i+\cdots+\alpha_{j-1}).
\]
For instance, this formula reduces to~$\binom{n}{2}!$ after
setting all weights~$\alpha_i=1$.

The \emph{totally nonnegative Grassmannian} $\gr(k,n)_{\geq0}$ is introduced in
\cite{Postnikov} as the subset of points in the real Grassmannian~$\gr(k,n)$
which have all nonnegative Pl\"ucker coordinates.  It is related to areas as
diverse as cluster algebras (\cite{GL}),  electrical networks (\cite{Lam}),
solitons (\cite{KW}), scattering amplitudes in Yang-Mills theory
(\cite{Arkani}), and the mathematical theory of juggling (\cite{KLS}).
Postnikov gave a decomposition of~$\gr(k,n)_{\geq0}$ into \emph{positroid cells}
defined by setting certain Pl\"ucker coordinates equal to zero, and he
conjectured that this decomposition forms a regular CW-complex.  A
generalization of that conjecture due to Williams (\cite{Williams2}) was proved
by Galashin, Karp, and Lam (\cite{GKL}).  Our object of interest is the face
poset of this complex, known as the \emph{circular Bruhat order} (\cite[Section
17]{Postnikov}).  Postnikov's work provides characterizations in terms of many
different combinatorial objects, e.g.,  decorated permutations, Grassmannian
necklaces, Le-diagrams, and  equivalence classes of certain plabic (planar,
bi-colored) graphs.  The list is extended by Knutson, Lam, and Speyer
(\cite{KLS}) to include bounded affine permutations, bounded juggling patterns,
and equivalence classes of intervals in the~$k$-Bruhat order for~$S_n$.  In this
paper, we use the language of bounded affine permutations.

Our purpose is to give a Stembridge-like formula for the circular Bruhat order.
We define ``circular'' analogues of Stembridge's weights
(Definition~\ref{def:weights}) and our main result, Theorem~\ref{thm:main},
provides a closed formula for the sum of the weights of the maximal chains in the
circular Bruhat order:
\[
  f(k,n)(\alpha_1+\cdots+\alpha_{n})^{k(n-k)},
\]
where~$f(k,n)$ is the number of Young tableaux for the~$k\times (n-k)$ rectangle
(cf.~Example~\ref{example:cb(2,3)}).  Section~\ref{sect:background} provides
background and notation.  Section~\ref{sect:main} states and proves the main
result, Theorem~\ref{thm:main}.  The proof relies on two technical lemmas whose
proofs are relegated to Section~\ref{sect:lemmas}.  
These proofs rely on the
interpretation of the circular Bruhat order in terms of intervals in
the~$k$-Bruhat order for~$S_n$ developed in \cite{KLS}.  We also use a result of
Bergeron and Sottile (\cite[Corollary 1.3.1]{BS}) on cyclic shifts of~$k$-Bruhat
intervals.  Their proof is a consequence of a symmetry they find for
Littlewood-Richardson coefficients using geometry.  It would be nice to have a
purely combinatorial proof of their cyclic shift result.

\subsection*{Acknowledgments} We would like to thank Alex Postnikov for
suggesting this project.

\section{Circular Bruhat order}\label{sect:background}
We recall ideas and notation introduced in \cite{KLS}, which built on earlier
work by Postnikov on the totally nonnegative Grassmannian (\cite{Postnikov}).
Our reference for the affine symmetric group is~\cite{BB}.
Let~$\tS$ denote the group of \emph{affine permutations} consisting of
bijections~$f\colon\Z\to\Z$ satisfying~$f(i+n)=f(i)+n$ for all~$i\in\Z$.  We use
the standard \emph{window} notation~$f=[f(1),f(2),\ldots,f(n)]$ to
represent~$f\in\tS$.  Define the averaging function on~$\tS$
by~$\av(f)=\frac{1}{n}\sum_{i=1}^{n}(f(i)-i)$, and for~$0\leq k\leq n$,
let~$\tSk:=\av^{-1}(k)$.  In particular,~$\tSz$ is the \emph{affine
symmetric group}.

The affine symmetric group is generated by its \emph{simple reflections}:
\[
  s_i =
  \begin{cases}
    [0,2,3,\ldots,n-1,n+1]&\text{if $i=0$,}\\
    [1,2,\ldots,i-1,i+1,i,i+2,\ldots,n]&\text{if $0<i\leq n$.}
  \end{cases}
\]
For instance,
\[
  [f(1),\ldots,f(n)]s_0=[f(0),f(2),\ldots,f(n-1),f(n+1)]=[f(n)-n,f(2),\ldots,f(n-1),f(1)+n].
\]
Then~$(\tS^{0},\left\{s_0,\ldots,s_{n-1}\right\})$ is the affine Coxeter
group~$\tilde{A}_{n-1}$ and is thus a graded poset under the Bruhat order.  The
\emph{reflections} for~$\tilde{A}_{n-1}$, i.e., the conjugates of the simple
reflections, are 
\begin{equation}\label{eqn:reflections}
  [1,2,\ldots,i-1,j-rn,i+1,\ldots,j-1,i+rn,j+1,\ldots,n]
\end{equation}
for~$1\leq i<j\leq n$ and~$r\in\Z$.

The mapping~$[f(1),\ldots,f(n)]\mapsto[f(1)-k,\ldots,f(n)-k]$ is a
bijection~$\tSk\to\tSz$, and thus the Bruhat order on~$\tSz$ induces a graded
poset structure on~$\tSk$ for which we now give an explicit description.  A
pair~$(i,j)\in\Z^2$ is an \emph{inversion} for~$f\in\tSk$ if~$i<j$
and~$f(j)>f(i)$.  Define an equivalence relation on the set of inversions
by~$(i,j)\sim(i',j')$ if~$i'=i+r n$ and~$j'=j+r n$ for some integer~$r$.  Then
the \emph{length} of~$f$, denoted~$\ell(f)$, is the number of equivalence
classes of inversions of~$f$.  (This notion of length coincides with that
inherited from the Bruhat order (\cite[Proposition~4.1]{BB}).)  In general,
if~$f\in\tS^{k'}$ and $g\in\tS^{k}$, then~$fg\in\tS^{k'+k}$.  In
particular,~$\tSz$ acts on~$\tSk$.  If~$f,g\in\tSk$, then~$f$ covers~$g$,
denoted $g\lessdot f$, exactly when $g=ft$ from some reflection~$t$
from~$\tilde{A}_{n-1}$ and $\ell(f)=\ell(g)+1$.

A permutation~$f\in\tS$ is \emph{bounded} if~$i\leq f(i)\leq i+n$ for
all~$i\in\Z$.  For each~$0\leq k\leq n$ the bounded elements of~$\tSk$ are
denoted 
\[
  \bound(k,n):=\left\{f\in\tSk:i\leq f(i)\leq i+n \text{ for all
  $i\in\Z$}\right\}.
\]
By Lemma~3.6 of~\cite{KLS},
$\bound(k,n)$ is a lower order ideal in~$\tSk$ and thus forms a graded
poset with rank function given by length.  The
\emph{dual} of a poset~$P=(P,<_P)$, is the poset~$P^*=(P,<_{P^*})$ for which~$a<_{P^*}b$ if and
only if~$b<_P a$.  We now arrive at our object of study:
\begin{defn}
  The \emph{circular Bruhat order}~$\cb(k,n)$ is the poset~$\bound(k,n)^*$.  
\end{defn}

The circular Bruhat order was originally defined in~\cite{Postnikov} in terms of
\emph{decorated permutations}.  These are permutations~$\pi\in S_n$ for which
each fixed point is assigned a color---either black or white.  The
\emph{anti-excedances} of a decorated permutation~$\pi$ are~$i\in[n]$ for which
either~$\pi^{-1}(i)>i$ or~$i$ is a white fixed point.  Then~$\cb(k,n)$ was
defined to be the set of decorated permutations with~$k$ anti-excedances and
with a poset structure determined by \emph{alignments}
and~$\emph{crossings}$ in \emph{chord diagrams}.  See~\cite{Postnikov} for
details. To go from a bounded affine permutation~$f$ to a decorated
permutation~$\pi$, reduce the window of~$f$ modulo~$n$, and then color each
fixed point~$i$ in the resulting permutation black if~$f(i)=i$ or white
if~$f(i)=i+n$.  We translate the notion of an anti-excedance from decorated
permutations to bounded affine permutations:

\begin{defn}
  The \emph{anti-excedances} of a bounded affine permutation $f\in \cb(k,n)$ are
  the integers $f(i)-n$  such that
  $i\in[n]$ and~$f(i)>n$.
\end{defn}
\noindent One may check that the elements of~$\cb(k,n)$ are exactly the bounded
affine permutations with~$k$ anti-excedances. 

To describe the poset structure on~$\cb(k,n)$ in detail, note that for
a reflection of an element of~$\cb(k,n)$ to remain in~$\cb(k,n)$, it is necessary
(but not sufficient) that the integer~$r$ in~\eqref{eqn:reflections} be~$0$
or~$1$.  Thus, for~$i,j\in[n]$ with~$i\neq j$, we define
\[
  t_{ij}=
  \begin{cases}
    [1,2,\ldots,i-1,j,i+1,\ldots,j-1,i,j+1,\ldots,n]&\text{if~$i<j$},\\
    [1,2,\ldots,j-1,i-n,j+1,\ldots,i-1,j+n,i+1,\ldots,n]&\text{if~$i>j$}.
  \end{cases}
\]
The cover relations in~$\cb(k,n)$ are given by $g\lessdot f$ if and only if there exists~$t_{ij}$ such that~$g
=ft_{ij}$ and~$\ell(f)=\ell(g)-1$ (recalling that~$\cb(k,n)$ is the dual
of~$\bound(k,n)$). 

By Lemma~17.6 of~\cite{Postnikov}, the unique maximal element of~$\cb(k,n)$
is
\[
 f_{\mathrm{top}} := [1+k,2+k,\ldots,n+k].
\] 
The minimal elements are in bijection with~$\binom{[n]}{k}$.
Given~$\lambda\in\binom{[n]}{k}$, the corresponding minimal element
is
\[
  f_{\mathrm{min},\lambda}(i)=
  \begin{cases}
    i+n&\text{if $i\in\lambda$,}\\
    i&\text{otherwise.}
  \end{cases}
\]
We have $\ell(f_{\mathrm{top}})=0$,
and~$\ell(f_{\mathrm{min},\lambda})=k(n-k)$ for any minimal element.  Thus, the rank function
for~$\cb(k,n)$ is $\rk(f) = k(n-k)-\ell(f)$.  By Proposition~23.1
of~\cite{Postnikov}, the exponential generating function for the cardinality
of~$\cb(k,n)$ is 
\[
  \sum_{0\leq k\leq
  n}|\cb(k,n)|x^{k}\frac{y^{n}}{n!}=e^{xy}\frac{x-1}{x-e^{y(x-1)}}.
\]
For the rank generating function of~$\cb(k,n)$, see~\cite{Williams}.

\section{Main theorem}\label{sect:main}
\begin{defn}\label{def:weights} Let~$\alpha_1,\ldots,\alpha_n$ be indeterminates.
  The \emph{weight}  of a covering~$ft_{ij}\lessdot f$ in~$\cb(k,n)$ is
  the sum of~$\alpha_i$ through~$\alpha_{j-1}$ in cyclic order:
  \[
    \wt(ft_{ij}\lessdot f)=
    \begin{cases}
      \alpha_i+\alpha_{i+1}+\cdots+\alpha_{j-1}&\text{if $i<j$},\\
      \alpha_{i}+\cdots+\alpha_n+\alpha_1+\alpha_2+\cdots+\alpha_{j-1}&\text{if~$i>j$}. 
    \end{cases}
  \]
  The \emph{weight} of a saturated chain in~$\cb(k,n)$ is the product of the
  weights of its cover relations (the empty chain is assigned weight~$1$).
  For~$r\in[n]$, a covering is~\emph{$r$-good} if~$\alpha_r$ appears in its
  weight.  A saturated chain in~$\cb(k,n)$ is \emph{$r$-good} if all of its
  cover relations are~$r$-good.  For arbitrary~$r\in \Z$, we define~$r$-good
  covers and chains by replacing~$r$ with its representative in~$[n]$ modulo~$n$.
\end{defn}

Our main theorem is the following.
\begin{thm}\label{thm:main}
  The sum of the weights of the maximal chains in~$\cb(k,n)$ is
  \[
    f(k,n)(\alpha_1+\cdots+\alpha_{n})^{k(n-k)}
  \]
  where~$f(k,n)$ is the number of standard Young tableaux of a~$k\times (n-k)$ rectangle.
\end{thm}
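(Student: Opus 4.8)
The plan is to mimic Stembridge's "weight-sum" argument, where the key is a local recursion near the top of the poset combined with an induction on rank. Concretely, fix the target element $f_{\mathrm{top}}=[1+k,\ldots,n+k]$ and, for each $f\in\cb(k,n)$, let $W(f)$ denote the sum of the weights of all saturated chains from $f$ up to $f_{\mathrm{top}}$. We want $\sum_{\lambda\in\binom{[n]}{k}} W(f_{\mathrm{min},\lambda}) = f(k,n)(\alpha_1+\cdots+\alpha_n)^{k(n-k)}$. Since a weight of a cover $ft_{ij}\lessdot f$ is always a consecutive cyclic block $\alpha_i+\cdots+\alpha_{j-1}$, and the rank from $f_{\mathrm{top}}$ down to any minimal element is exactly $k(n-k)$, the natural guess is that $W(f)$ is always a scalar multiple of $(\alpha_1+\cdots+\alpha_n)^{\ell(f)}$. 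First I would try to prove exactly this: there is a constant $c(f)$ (a nonnegative integer) with
\[
W(f) = c(f)\,(\alpha_1+\cdots+\alpha_n)^{\ell(f)},
\]
where $c(f_{\mathrm{top}})=1$ and $c(f_{\mathrm{min},\lambda}) = f(k,n)$ for every $\lambda$.

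The crux is the recursion $W(f)=\sum_{f\lessdot h} \wt(f\lessdot h)\,W(h)$, which by the inductive hypothesis becomes $W(f) = (\alpha_1+\cdots+\alpha_n)^{\ell(f)-1}\sum_{f\lessdot h}\wt(f\lessdot h)\,c(h)$. So I need to show
\[
\sum_{f\lessdot h}\wt(f\lessdot h)\,c(h) \;=\; c(f)\,(\alpha_1+\cdots+\alpha_n),
\]
i.e.\ that the combination $\sum_{f\lessdot h}\wt(f\lessdot h)\,c(h)$ is a \emph{symmetric} linear form in the $\alpha_i$ — this is exactly where the "circular" structure must be exploited. Here is where I expect to use the two technical lemmas and the Bergeron--Sottile cyclic-shift result cited in the introduction: the cyclic symmetry $i\mapsto i+1$ acts on $\cb(k,n)$, permuting $k$-Bruhat intervals, and carries $c$-values around; combined with the fact that the weight of a cover shifts accordingly, this should force the coefficient of each $\alpha_r$ in $\sum_{f\lessdot h}\wt(f\lessdot h)c(h)$ to be independent of $r$, hence equal to $c(f)$. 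Equivalently, one shows that for each $r\in[n]$, the $r$-good covers above $f$ contribute a total coefficient $c(f)$ to $\alpha_r$; summing over $r$ gives the displayed identity, and then induction on $\ell(f)$ closes the loop.

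For the base of the induction at the minimal elements I would argue that $c(f_{\mathrm{min},\lambda})$ counts maximal chains from $f_{\mathrm{min},\lambda}$ to $f_{\mathrm{top}}$ with all weights set to $1$; via the $k$-Bruhat interval interpretation of $\cb(k,n)$ from \cite{KLS}, a maximal chain in $\cb(k,n)$ corresponds to a maximal chain in an interval $[u,w]_k$ of the $k$-Bruhat order on $S_n$ isomorphic (after the standard identification) to the weak order / Bruhat structure on a $k\times(n-k)$ rectangle, and the number of such is $f(k,n)$ by the classical hook-length count for $\binom{n}{k}$-type posets. I would either invoke the known fact that every interval $[\mathrm{id},w]_k$ with $w$ the appropriate Grassmannian permutation is isomorphic to the poset of Young diagrams inside the $k\times(n-k)$ box — whose maximal chains number $f(k,n)$ — or derive it directly from the positroid-cell picture, where the big cell's closure is homeomorphic to a ball of dimension $k(n-k)$ and its face lattice over a fixed minimal face is graded of that length with a unique top.

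The main obstacle, as flagged, is establishing the symmetry identity $\sum_{f\lessdot h}\wt(f\lessdot h)\,c(h)=c(f)(\alpha_1+\cdots+\alpha_n)$ — i.e.\ pinning down how the constants $c(h)$ for the various covers $h$ of $f$ interact with the cyclic placement of the weight blocks. This is the analogue of Stembridge's identity $\sum_{\pi\lessdot\sigma}\wt(\pi\lessdot\sigma)\,c(\sigma)$ being balanced, but his proof is tailored to the finite Bruhat order; here one genuinely needs the cyclic-shift symmetry of $k$-Bruhat intervals (Bergeron--Sottile), which is exactly the nontrivial input that makes "circular" work where a naive root-system argument would not. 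I expect the two lemmas of Section~\ref{sect:lemmas} to isolate (i) the precise list of covers $f\lessdot h$ in $\cb(k,n)$ together with their weights in terms of the $k$-Bruhat data, and (ii) the statement that cyclic rotation permutes these covers compatibly with the $c$-values; granting both, the induction and the final summation over $r\in[n]$ are routine.
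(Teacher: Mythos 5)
Your overall architecture (a local identity plus induction on rank, with cyclic shifts and Bergeron--Sottile as the key symmetry input and a Young-lattice count at the end) is the right family of ideas, but the structural claim you propose to prove is false, so the induction never gets off the ground. Take $\cb(2,3)$ and the minimal element $f=[4,5,3]$ (see Figure~\ref{fig:CB(2,3)}): the two saturated chains from $f$ up to $f_{\mathrm{top}}=[3,4,5]$ have weights $\alpha_2\cdot\alpha_1$ and $(\alpha_1+\alpha_2)\cdot\alpha_2$, so your $W(f)=2\alpha_1\alpha_2+\alpha_2^2$, which is not a scalar multiple of $(\alpha_1+\alpha_2+\alpha_3)^2$. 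Thus $W(f)=c(f)(\alpha_1+\cdots+\alpha_n)^{\ell(f)}$ fails already in the paper's first example; the weight sum is symmetric only after aggregating over all elements of a given rank, not element by element. Your base case is also internally inconsistent: if $c(f_{\mathrm{min},\lambda})=f(k,n)$ held for every $\lambda$, then $\sum_{\lambda}W(f_{\mathrm{min},\lambda})$ would equal $\binom{n}{k}f(k,n)(\alpha_1+\cdots+\alpha_n)^{k(n-k)}$, off from the theorem by the factor $\binom{n}{k}$. Relatedly, the number of maximal chains of $\cb(k,n)$ through a fixed minimal element is not $f(k,n)$ (it is $2$, not $1$, in the example above): the interval above a minimal element is not the Young lattice $L(k,n-k)$; only the $n$-good maximal chains, which correspond to maximal chains in $[\mathrm{id},w_{\mathrm{max}}]_k$, i.e.\ in the poset of all $k$-Grassmannian permutations, are counted by $f(k,n)$.

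The paper repairs exactly this defect by attaching a multiplicity to the bottom of each chain rather than trying to extract a constant from $W(f)$. Let $\delta(g)$ be the number of $r$-good downward-saturated chains with maximal element $g$; the substantive input (Lemma~\ref{lemma:independence}, proved via the cyclic shift $\chi$ together with Theorem~\ref{thm:BS} applied to the $k$-Bruhat intervals $[u_f,w_f]_k$) is that $\delta(g)$ does not depend on $r$. This $r$-independence is what makes the local identity symmetric: the coefficient of $\alpha_r$ in $\sum_{g\lessdot f}\delta(g)\wt(g\lessdot f)$ is precisely the number of $r$-good downward-saturated chains from $f$, hence equals $\delta(f)$ for every $r$, giving $\sum_{g\lessdot f}\delta(g)\wt(g\lessdot f)=\delta(f)(\alpha_1+\cdots+\alpha_n)$. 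Inducting downward from $f_{\mathrm{top}}$ on the length of the bottom element then yields $\sum_{C}\delta(\min(C))\wt(C)=\delta(f_{\mathrm{top}})(\alpha_1+\cdots+\alpha_n)^m$ over chains whose minimum has length $m$; at $m=k(n-k)$ every $\delta(\min(C))=1$, and $\delta(f_{\mathrm{top}})$ is the number of $n$-good maximal chains, which is $f(k,n)$ by Lemma~\ref{lemma:count}. So the cyclic-shift symmetry you invoke is indeed the right tool, but it must be applied to the counts $\delta$ of good chains \emph{below} each element, not to the weight polynomials $W(f)$ themselves, which are genuinely asymmetric.
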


\begin{example}\label{example:cb(2,3)}
  Figure~\ref{fig:CB(2,3)} illustrates $\cb(2,3)$ with its cover weights.  The
  sum of the weights of its six maximal chains is
  \[
    \alpha_1\alpha_2+\alpha_1(\alpha_1+\alpha_3)+\alpha_2(\alpha_1+\alpha_2)+\alpha_2\alpha_3
    +\alpha_3\alpha_1+\alpha_3(\alpha_2+\alpha_3)=f(2,3)(\alpha_1+\alpha_2+\alpha_3)^{2},
  \]
  where~$f(2,3)=1$ since there is only one Young tableau for the~$2\times 1$
  rectangle.
\end{example}

\begin{figure}[h]
  \centering
  \begin{center}
    \def\x{5}
    \def\y{2}
    \begin{tikzpicture}
      \node at (0,0) (11) {$[3,4,5]$};

      \node at (-\x,-\y) (21) {$[4,3,5]$};
      \node at (0,-\y) (22) {$[3,5,4]$};
      \node at (\x,-\y) (23) {$[2,4,6]$};

      \node at (-\x,-2*\y) (31) {$[4,5,3]$};
      \node at (0,-2*\y) (32) {$[4,2,6]$};
      \node at (\x,-2*\y) (33) {$[1,5,6]$};

      \draw (11)--(21) node[midway, left=2.5mm] {$\alpha_1$};
      \draw (11)--(22) node[midway, left] {$\alpha_2$};
      \draw (11)--(23) node[midway, right=1.5mm] {$\ \alpha_3$};

      \draw (21)--(31) node[midway, left=0mm] {$\alpha_2$};
      \draw (21)--(32) node[pos=-0.03mm, right=5mm,rotate=-20] {$\alpha_1+\alpha_3$};
      \draw (22)--(31) node[pos=-0.01mm, left=5mm,rotate=20] {$\alpha_1+\alpha_2$};
      \draw (22)--(33) node[pos=0.1, right=3mm] {$\alpha_3$};
      \draw (23)--(32) node[pos=0.1, left=3mm] {$\alpha_1$};
      \draw (23)--(33) node[midway, right=0mm] {$\alpha_2+\alpha_3$};
    \end{tikzpicture}
  \end{center}
  \caption{$\cb(2,3)$ with edge weights.}
  \label{fig:CB(2,3)}
\end{figure}
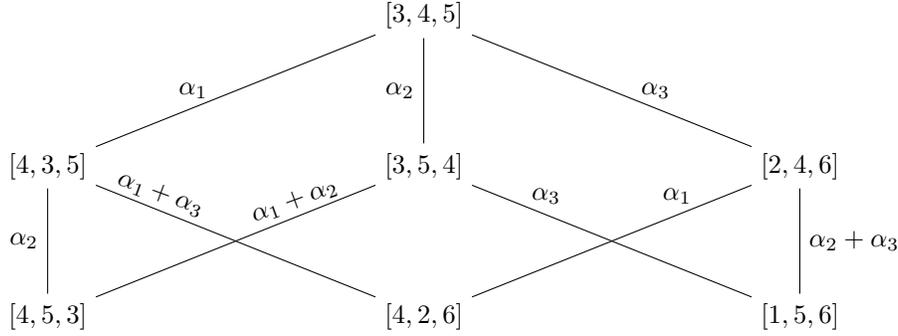
The proof of Theorem~\ref{thm:main} follows from two lemmas whose proofs appear
in next section.

\begin{lemma}\label{lemma:independence} Let~$f\in\cb(k,n)$. Then the number of
  $r$-good downward-saturated chains in~$\cb(k,n)$ with maximal element~$f$ is
  independent of~$r$.
\end{lemma}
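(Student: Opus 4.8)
The proof hinges on a cyclic‑shift automorphism of $\cb(k,n)$. Let $u\in\tS$ be the affine permutation $u(x)=x+1$ (so $u\in\tilde S_n^{1}$), and define $\sigma\colon\tSk\to\tSk$ by $\sigma(f)=ufu^{-1}$, i.e.\ $\sigma(f)(i)=f(i-1)+1$ (this lands in $\tSk$ since conjugation by $u$ changes the average by $1-1=0$). First I would record the elementary properties of $\sigma$. \emph{(i)} $\av(\sigma f)=\av(f)$, and $i\le f(i)\le i+n$ for all $i$ forces $i\le\sigma(f)(i)\le i+n$ for all $i$, so $\sigma$ restricts to a bijection $\bound(k,n)\to\bound(k,n)$. \emph{(ii)} $\sigma(f)(i)-\sigma(f)(j)=f(i-1)-f(j-1)$, so $(i,j)\mapsto(i-1,j-1)$ is a bijection on inversions and $\ell(\sigma f)=\ell(f)$; thus $\sigma$ preserves the rank function of $\cb(k,n)$. \emph{(iii)} $u\,t_{ij}\,u^{-1}=t_{i+1,\,j+1}$, the subscripts being reduced modulo $n$ into $[n]$ — a direct check from the definitions of $t_{ij}$ and $u$ — so $\sigma$ sends the $\cb(k,n)$-cover $ft_{ij}\lessdot f$ to the cover $\sigma(f)\,t_{i+1,j+1}\lessdot\sigma(f)$, and hence $\sigma$ is a poset automorphism of $\cb(k,n)$. \emph{(iv)} From Definition~\ref{def:weights}, $\wt\bigl(\sigma(f)\,t_{i+1,j+1}\lessdot\sigma(f)\bigr)$ is obtained from $\wt(ft_{ij}\lessdot f)$ by the substitution $\alpha_s\mapsto\alpha_{s+1}$ (subscripts mod $n$), in every case. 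Writing $N_r(f)$ for the number of $r$-good downward-saturated chains with maximal element $f$, properties \emph{(iii)} and \emph{(iv)} show that $\sigma$ restricts to a bijection from these chains onto the $(r+1)$-good downward-saturated chains with maximal element $\sigma(f)$, so
\[
  N_r(f)=N_{r+1}(\sigma f)\qquad\text{for all }r\in\Z .
\]

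Since $\sigma(f_{\mathrm{top}})=f_{\mathrm{top}}$ — immediate from $f_{\mathrm{top}}(i)=i+k$ — this already proves the lemma for $f=f_{\mathrm{top}}$. For a general $f$ it does not: $\sigma$ only permutes $f$ within its $\sigma$-orbit, so the displayed identity alone gives only that $r\mapsto N_r(f)$ is periodic with period dividing the length of that orbit (which can be $n$).

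To upgrade periodicity to full independence for an arbitrary $f$, I would work inside the $k$-Bruhat picture of \cite{KLS}. There the principal lower ideal $\{g\in\cb(k,n):g\le f\}$ is encoded by an interval $[u_f,w_f]$ in the $k$-Bruhat order on $S_n$, taken up to the equivalence on such intervals described in \cite{KLS}, in such a way that the downward-saturated chains below $f$, weighted as in Definition~\ref{def:weights}, are governed by the weighted maximal chains of $[u_f,w_f]$ carrying the standard (Stembridge-type) $k$-Bruhat edge weights. I would then invoke the cyclic-shift result of Bergeron and Sottile, \cite[Corollary~1.3.1]{BS}: it supplies isomorphisms of $k$-Bruhat intervals that rotate the edge weights cyclically among the $\alpha_s$ while preserving the equivalence class that represents $f$ — precisely what is needed to match the $r$-good with the $(r+1)$-good downward-saturated chains \emph{below the same $f$}. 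Iterating then gives $N_r(f)=N_{r'}(f)$ for all $r,r'$.

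The main obstacle is this last step, in two parts: pinning down the KLS dictionary between downward-saturated chains of $\cb(k,n)$ — whose cover weights are \emph{cyclic} intervals that may wrap past $n$ — and weighted maximal chains in a genuine, non-affine $k$-Bruhat interval; and then checking that the Bergeron--Sottile shift acts on that data as an honest cyclic rotation of the weights attached to the fixed interval class of $f$, rather than merely carrying that class to a different one. A seemingly more elementary alternative is an induction on $\rk(f)$: granting that $N_r(g)$ is independent of $r$ for all $g$ of smaller rank, the step reduces to showing that $\sum_g N_r(g)$, summed over the $g\lessdot f$ for which the cover $g\lessdot f$ is $r$-good, is independent of $r$; but this is a genuine statement about the lower link of $f$ in $\cb(k,n)$ and does not look any easier — it too appears to call on the Bergeron--Sottile input — so I would not expect to circumvent it.
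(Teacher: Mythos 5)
Your first half coincides with the paper's setup: your $\sigma$ is exactly the paper's cyclic shift $\chi(f)=[f(0)+1,\ldots,f(n-1)+1]$, and your properties (i)--(iv) amount to the statement that $\chi$ is a graded poset automorphism of $\cb(k,n)$ satisfying Proposition~\ref{prop:cyclic shift}, whence $N_r(f)=N_{r+1}(\chi f)$. You also correctly diagnose that this alone gives only periodicity along the $\chi$-orbit. The genuine gap is in the second half: the orbit-bridging step is left as your acknowledged ``main obstacle,'' and the mechanism you envision for it is not the one that works. Theorem~\ref{thm:BS} is not an isomorphism of intervals and carries no weight data; it is a pure counting statement --- two $k$-Bruhat intervals $[u,v]_k$ and $[x,y]_k$ with $cvu^{-1}c^{-1}=yx^{-1}$ have the same \emph{number} of maximal chains --- so there is no ``weight-rotating isomorphism preserving the interval class of $f$'' to invoke. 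Your description of the KLS dictionary is also off: by Proposition~\ref{prop:anti-isomorphism} and Corollary~\ref{cor:chains}, only the \emph{$n$-good} downward-saturated chains below $f$ (those staying inside $\cb(k,n)_{\Lambda(f)}$) correspond to maximal chains of $[u_f,w_f]_k$, and no Stembridge-type edge weights on that interval enter at all.

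What actually closes the argument, and what you are missing, is that only the single unweighted count $\delta_n(g)$ (the number of $n$-good downward-saturated chains below $g$) needs to be constant along the $\chi$-orbit of $f$. By Corollary~\ref{cor:chains}, $\delta_n(g)$ equals the number of maximal chains of $[u_g,w_g]_k$, and from $f=u_f t_k w_f^{-1}$ one verifies, working modulo $n$, that $c\,u_fw_f^{-1}(i)=f(i)+1=\chi(f)(i+1)=u_{\chi(f)}w_{\chi(f)}^{-1}c(i)$, so the hypothesis of Theorem~\ref{thm:BS} holds for the pair $[u_f,w_f]_k$, $[u_{\chi(f)},w_{\chi(f)}]_k$; iterating gives $\delta_n(f)=\delta_n(\chi^r(f))$ for all $r$. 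Combining this with your identity $N_r(f)=N_{r+1}(\chi f)$ (i.e.\ $\delta_r(f)=\delta_n(\chi^s(f))$ for a suitable power $s$) yields $\delta_r(f)=\delta_n(f)$ for every $r$, which is the lemma. So you have named the right ingredients --- the cyclic shift, the KLS interval $[u_f,w_f]_k$, and Bergeron--Sottile --- but the conjugation identity and the realization that only the unweighted $n$-good count must be transported along the orbit (no weighted or isomorphism-level statement is needed) constitute precisely the content you did not supply.
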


\begin{lemma}\label{lemma:count}
  The number of $n$-good maximal chains in~$\cb(k,n)$ is~$f(k,n)$.
\end{lemma}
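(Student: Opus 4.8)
\emph{Proof idea.} The plan is to set up a bijection between the $n$-good maximal chains of $\cb(k,n)$ and the maximal chains of the Bruhat order on $\binom{[n]}{k}$, the set of $k$-element subsets of $[n]$ ordered by $S\le T$ iff the $i$-th smallest element of $S$ is at most the $i$-th smallest element of $T$ for every $i$. Via the standard bijection $\binom{[n]}{k}\to\{\lambda\subseteq(n-k)^{k}\}$ that order is isomorphic to the interval from the empty partition $\varnothing$ to the $k\times(n-k)$ rectangle in Young's lattice, with covers ``add a box''; hence its maximal chains --- all running from $\{1,\dots,k\}$ to $\{n-k+1,\dots,n\}$ --- are exactly the standard Young tableaux of the $k\times(n-k)$ rectangle, $f(k,n)$ of them. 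The bijection sends $f_{\mathrm{top}}=f_{0}\gtrdot f_{1}\gtrdot\cdots\gtrdot f_{k(n-k)}=f_{\mathrm{min},\lambda}$ to $A(f_{0}),A(f_{1}),\dots,A(f_{k(n-k)})$, where $A(f)$ denotes the set of anti-excedances of $f$. (A cover $ft_{ij}\lessdot f$ is $n$-good exactly when $i>j$, which is exactly when $\alpha_{n}$ occurs in $\wt(ft_{ij}\lessdot f)$; the cases $k\in\{0,n\}$ are immediate.)

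The crux is the structural claim $(\star)$: if $f$ is reachable from $f_{\mathrm{top}}$ by an $n$-good downward chain, then the set of anti-excedance positions of $f$ (the $i\in[n]$ with $f(i)>n$) is exactly $\{n-k+1,\dots,n\}$, and $f$ is order-preserving both on $\{1,\dots,n-k\}$ (onto $[n]\setminus A(f)$) and on $\{n-k+1,\dots,n\}$ (onto $\{v+n:v\in A(f)\}$); in particular $f$ is determined by $A(f)$, and $A(f_{\mathrm{top}})=\{1,\dots,k\}$. I would establish $(\star)$ by induction down a chain, using the window formula $ft_{ij}(j)=f(i)-n$, $ft_{ij}(i)=f(j)+n$ for $i>j$ (with $ft_{ij}$ agreeing with $f$ elsewhere) together with boundedness of $ft_{ij}$ and the cover condition $\ell(ft_{ij})=\ell(f)+1$ --- it is the length condition that keeps the new permutation order-preserving. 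Granting $(\star)$, a direct computation shows that an $n$-good cover $f\gtrdot ft_{ij}$ replaces one anti-excedance value $a$ of $f$ by $a+1$ with $a+1\notin A(f)$, i.e.\ $A(f)\lessdot A(ft_{ij})$ is precisely an ``add a box'' cover in $\binom{[n]}{k}$. Because $\cb(k,n)$ is graded with $\rk(f_{\mathrm{top}})=k(n-k)$, an $n$-good maximal chain has $k(n-k)$ covers, so its image is a saturated chain of that length starting at $\{1,\dots,k\}$, hence a maximal chain of $\binom{[n]}{k}$ (in particular every $n$-good maximal chain ends at $f_{\mathrm{min},\{n-k+1,\dots,n\}}$).

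For the bijection, I would check injectivity and surjectivity via the explicit recipe above. Injectivity: $f_{m+1}$ is recovered from $f_{m}$ and the box move $A(f_{m})\lessdot A(f_{m+1})$, since the move determines $a$, hence $i$ (the position of the value $a+n$ in $f_{m}$) and $j$ (the position of the value $a+1$), hence $ft_{ij}$; so the chain is determined by its sequence of anti-excedance sets. Surjectivity: starting from $f_{\mathrm{top}}$, lift any maximal chain of $\binom{[n]}{k}$ one cover at a time by the same recipe, using $(\star)$ for $f_{m}$ to see that $i\in\{n-k+1,\dots,n\}$ and $j\le n-k$, so $i>j$ (the cover is $n$-good), that the boundedness inequalities for $f_{m}t_{ij}$ hold, and that $\ell(f_{m}t_{ij})=\ell(f_{m})+1$; then $(\star)$ persists and we continue.

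I expect the main obstacle to be $(\star)$ and the length bookkeeping in the ``add a box $\Leftrightarrow$ $n$-good cover'' equivalence: one must show the affine length function's cover condition, together with boundedness near the seam at positions $n-k$, $n-k+1$, $n$, forces both the preservation of the anti-excedance structure in $(\star)$ and the single-box change of $A$ at each $n$-good step, and conversely that a box move always lifts to a wrap-around, length-increasing, boundedness-preserving cover. The cleanest route is likely through the $k$-Bruhat interpretation of $\cb(k,n)$ from \cite{KLS}: there the $n$-good covers are literally the covers of one fixed (un-cyclically-shifted) copy of the $k$-Bruhat order, so an $n$-good maximal chain is a saturated chain from the identity to the longest Grassmannian permutation with descent only at $k$ (shape $k\times(n-k)$), where ``adding a box'' and ``length $+1$'' coincide and the count $f(k,n)$ is immediate. (The cyclic-shift symmetry of \cite{BS}, used for Lemma~\ref{lemma:independence}, is not needed for this lemma.)
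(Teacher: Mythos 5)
Your proposal is correct in outline, but it takes a more hands-on route than the paper. The paper gets the lemma almost for free from machinery it has already built: applying Corollary~\ref{cor:chains} with $f=f_{\mathrm{top}}$ (so $u_f=\mathrm{id}$, $w_f=w_{\mathrm{max}}$), the $n$-good maximal chains are identified with maximal chains of the $k$-Bruhat interval $[\mathrm{id},w_{\mathrm{max}}]_k$, which is exactly the set of $k$-Grassmannian permutations; this interval is then anti-isomorphic to $L(k,n-k)$, whose maximal chains are the standard Young tableaux of the rectangle. Your argument instead works directly with bounded affine permutations, tracking the anti-excedance sets $A(f)$ and proving the structural claim $(\star)$ by induction along the chain; note that $(\star)$ is really the image, under the anti-isomorphism of Proposition~\ref{prop:anti-isomorphism} with $\lambda=\{n-k+1,\dots,n\}$, of the fact that the $k$-Bruhat up-set of $\mathrm{id}$ consists precisely of the $k$-Grassmannian permutations, so your route essentially re-proves the relevant special case of Proposition~\ref{prop:anti-isomorphism} at the level of covers rather than invoking it. The plan is sound: once $(\star)$ is granted, the length condition (no value of $f$ strictly between $f(i)$ and $f(j)+n$ at positions cyclically between $i$ and $j+n$) does force the replaced anti-excedance value $a$ to become exactly $a+1$, and conversely every box-adding move lifts (boundedness at the seam works out because $a+1\notin A(f)$ rules out $f(i)=i+n$). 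You are right to order the logic as ``granting $(\star)$'': without the Grassmannian structure an $n$-good cover can change an anti-excedance value by more than one (e.g.\ $[4,5,3,6]\gtrdot[2,5,3,8]$ in $\cb(2,4)$ changes $2$ to $4$), so the single-box claim is not a general fact about $n$-good covers. What your approach buys is a self-contained, elementary bijection ($n$-good maximal chains $\leftrightarrow$ maximal chains in the Gale order on $\binom{[n]}{k}$ $\leftrightarrow$ SYT) that does not need the full anti-isomorphism; what it costs is redoing the cover-condition bookkeeping that the paper has already done once in Proposition~\ref{prop:anti-isomorphism}. Your closing remark about the $k$-Bruhat interpretation is precisely the paper's proof.
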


\begin{proof}[Proof of Theorem~\ref{thm:main}]
  Let~$\delta(f)$ be the number of~$r$-good downward-saturated chains with
  maximal element $f\in\cb(k,n)$.  This number is independent of~$r$ by
  Lemma~\ref{lemma:independence}.  Noting  linearity in the~$\alpha_i$, it
  is straightforward to check that
  \begin{equation}\label{eqn:induct}
    \sum_{t_{ij}:ft_{ij}\lessdot f}\delta(ft_{ij})\wt(ft_{ij}\lessdot f)
    = \delta(f)(\alpha_{1}+\cdots+\alpha_{n}).
  \end{equation}

  Let~$\mathcal{C}(m)$ denote the set of maximal chains~$C$ in~$\cb(k,n)$ such
  that~$\max(C)=f_{\mathrm{top}}$ and~$\ell(\min(C))=m$.  We now show by induction on~$m$
  that
  \[
    \sum_{C\in\mathcal{C}(m)}\delta(\min(C))\wt(C)=\delta(f_{\mathrm{top}})(\alpha_1+\cdots+\alpha_{n})^{m}
  \]
  for~$0\leq m\leq k(n-k)$.  The case~$m=0$ is a tautology since the only element
  of~$\cb(k,n)$ with length~$0$ is~$f_{\mathrm{top}}$.  On the other hand, in the
  case~$m=k(n-k)$ we are summing over maximal chains~$C$ of~$\cb(k,n)$.  For
  these~$\delta(\min(C))=1$, and Theorem~\ref{thm:main} will then follow from
  Lemma~\ref{lemma:count}.  To proceed with induction, fix some~$m$ with~$0\leq
  m< k(n-k)$.  Then
  \begin{align*}
    \sum_{C\in\mathcal{C}(m+1)}\delta(\min(C))\wt(C)
    &= \sum_{C'\in \mathcal{C}(m)}\sum_{f\lessdot \min(C')}\delta(f)\wt(f\lessdot
    \min(C'))\wt(C')
    \\[5pt]
    %    &= \sum_{C'\in \mathcal{C}(m)}\wt(C')\sum_{f\lessdot \min(C')}\delta(f)\wt(f\lessdot
%    \min(C'))\\[5pt]
    &= \sum_{C'\in
  \mathcal{C}(m)}\delta(\min(C'))(\alpha_1+\cdots+\alpha_{n})\wt(C')&\text{(by~\eqref{eqn:induct})}\\[6pt]
    &= \delta(f_{\mathrm{top}})(\alpha_1+\cdots+\alpha_{n})^{m+1}&\text{(by
    induction)}.
  \end{align*}
\end{proof}

\section{Proofs of lemmas}\label{sect:lemmas}

\subsection{\texorpdfstring{$k$}{\textbf{k}}-Bruhat order}
%% NOTE: fix the above by removing href for section labels?
Our references for the~$k$-Bruhat order are~\cite{BS} and~\cite{KLS}.  
\begin{defn}\label{def:kbruhat} The \emph{$k$-Bruhat order}~$\leq_k$ on the symmetric group~$S_n$ is given
  by~$u\leq_k v$ if
\begin{enumerate}
  \item\label{item1-kbruhat} $u(i)\leq v(i)$ for $1\leq i\leq k$;
  \item\label{item2-kbruhat} $u(j)\geq v(j)$ for $k<j\leq n$;
  \item\label{item3-kbruhat} $u(i)<u(j)$ implies $v(i)<v(j)$ if $1\leq i<j\leq k$ or if $k<i<j\leq n$
\end{enumerate}
\end{defn}
The cover relations for the~$k$-Bruhat order have the form~$u\lessdot_k v$
if~$u\lessdot v$ (in the ordinary Bruhat order)
and~$\left\{u(1),\ldots,u(k)\right\}\neq\left\{v(1),\ldots,v(k)\right\}$.  Each
interval~$[u,w]_k$ in the $k$-Bruhat order is a graded poset of rank $\ell(w)-\ell(u)$.
\begin{defn}
  A permutation~$w\in S_n$ is \emph{$k$-Grassmannian} (or just
  \emph{Grassmannian} when~$k$ is clear from context) if $w(1)<\cdots<w(k)$
  and~$w(k+1)<\cdots<w(n)$.  
  These are in bijection with $\lambda\in\binom{[n]}{k}$ by letting $w_{\lambda}$ be
  the unique $k$-Grassmannian permutation such
  that~$\left\{w(1),\ldots,w(k)\right\}=\lambda$.  
\end{defn}

Denote the positions of the anti-excedances of $f\in\cb(k,n)$ by
\[
  \Lambda(f)=\left\{i\in[n]:f(i)-n \text{ is an anti-excedance of~$f$}\right\}
  = \left\{i\in[n]: f(i)>n\right\}.
\]
Then associate a $k$-Grassmannian permutation to~$f$ by
\[
  w_f=w_{\Lambda(f)}.
\]
Fixing~$\lambda\in\binom{[n]}{k}$, we define two posets.  The first is the
principal order ideal in the~$k$-Bruhat order generated by~$w_{\lambda}$:
\[
S_{n,\lambda}=\left\{u\in S_n:u\leq_k w_{\lambda}\right\}.
\]
The second is
\[
  \cb(k,n)_{\lambda}=\left\{f\in\cb(k,n):\Lambda(f)=\lambda\right\}
\]
with partial order~$\leq_{\gamma}$ defined by its cover
relations: $g\lessdot_{\gamma} f$ if~$g$ is covered
by~$f$ in~$\cb(k,n)$ and the covering~$g\lessdot f$ is~$n$-good ($\gamma$ is a
mnemonic for ``good'').  Note that a
covering~$ft_{ij}\lessdot f$ in $\cb(k,n)$ is $n$-good if and only if~$i>j$, in
which case~$\Lambda(ft_{ij})=\Lambda(f)$.

Embed~$S_n$ in~$\tS$ via~$u\mapsto[u(1),\ldots,u(n)]$ and
define the \emph{translation element}~$t_k=[1+n,2+n,\ldots,k+n,k+1,k+2,\ldots,n]\in\tSk$.
Taking our lead from~\cite{KLS}, for
each~$u\in S_{n,\lambda}$ define $f_{u}=f_{u,\lambda}=ut_kw_{\lambda}^{-1}$.
Therefore,
\[
  f_u(w_{\lambda}(i))=
  \begin{cases}
    u(i)+n&\text{if~$1\leq i\leq k$}\\
    u(i)&\text{if~$k<i\leq n$}.
  \end{cases}
\]
Since~$w_{\lambda}$ is $k$-Grassmannian and~$u\leq_k w_{\lambda}$, we have
$1\leq u(i)\leq w_{\lambda}(i)\leq n$ for~$1\leq i\leq k$, and~$w_{\lambda}(i)\leq u(i)\leq n$ for~$k<i\leq n$.
Therefore, $i\leq f(i)\leq i+n$ for all~$i$.  Further, $\Lambda(f_u)=\lambda$.
Hence,~$f_u\in\cb(k,n)_{\lambda}$.

For each~$f\in\cb(k,n)_{\lambda}$,
define~$u_{f}=u_{f,\lambda}=fw_{\lambda}t_k^{-1}$ so that
\[
  u_f(i)=
  \begin{cases}
    f(w_{\lambda}(i))-n&\text{if $1\leq i\leq k$}\\
    f(w_{\lambda}(i))&\text{if~$k<i\leq n$}.
  \end{cases}
\]
Since~$\lambda=\Lambda(f)$, it follows that~$1\leq u_f(i)\leq n$ for~$i\in[n]$.
To see that $u\leq_k w_{\lambda}$, first note that  $w_{\lambda}(i)\leq
f(w_{\lambda}(i))\leq w_{\lambda}(i)+n$ for all~$i$ since~$f\in\cb(k,n)$.
Properties (\ref{item1-kbruhat}) and (\ref{item2-kbruhat}) of
Definition~\ref{def:kbruhat} then follow.  Property~\ref{item2-kbruhat} holds
since~$w_{\lambda}$ is a $k$-Grassmannian element and, therefore,
$w_{\lambda}(i)$ is increasing for $1\leq i\leq k$ and for~$k<i\leq n$.

\begin{example}  Let~$\lambda=\left\{2,4,5\right\}\in\binom{[5]}{3}$ and
  $f=[3,6,5,9,7]$.
  Then~$w_{\lambda}=[2,4,5,1,3]$, and~$f\in\cb(3,5)_{\lambda}$ since its anti-excedances appear in
  positions~$2,4,$ and~$5$.  We have~$u_f=[1,4,2,3,5]$, which is formed by first
  listing the anti-excedances of~$f$, reduced modulo~$5$, as they appear in
  order by position in~$f$, i.e., $1=6-5$, $4=9-5,$ and~$2=7-5$, and then listing the
  non-anti-excedances,~$3$ and~$5$.  Reversing this process yields~$f_{u_f}=f$.
\end{example}

\begin{prop}\label{prop:anti-isomorphism}\footnote{For a closely related result,
  see \cite[Theorem 3.16]{KLS}.}
  Let~$\lambda\in\binom{[n]}{k}$.  Then the mapping
  \begin{align*}
    (S_{n,\lambda},\leq_k)&\to(\cb(k,n)_{\lambda},\leq_{\gamma})\\
    u&\mapsto f_u
  \end{align*}
  is an anti-isomorphism of posets with inverse~$f\mapsto u_f$. 
\end{prop}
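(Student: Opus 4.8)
The plan is to verify directly that $u \mapsto f_u$ and $f \mapsto u_f$ are mutually inverse bijections between the underlying sets, and then to check that each sends cover relations to cover relations in the opposite direction, which (since both posets are graded and the maps are bijective) suffices to establish the anti-isomorphism.

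First I would confirm the set-level bijection. The discussion preceding the proposition already shows that $f_u \in \cb(k,n)_\lambda$ for $u \in S_{n,\lambda}$ and that $u_f \in S_{n,\lambda}$ for $f \in \cb(k,n)_\lambda$. Since $f_u = u t_k w_\lambda^{-1}$ and $u_f = f w_\lambda t_k^{-1}$ are defined by multiplication by fixed invertible elements of the affine symmetric group, the compositions $u \mapsto f_u \mapsto u_{f_u}$ and $f \mapsto u_f \mapsto f_{u_f}$ are the identity as maps $\tS \to \tS$; the only content is that the maps respect the subsets $S_{n,\lambda}$ and $\cb(k,n)_\lambda$, which is exactly what was checked. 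So the bijection is essentially formal once the two containment statements are in hand (one should double-check the claim that $\Lambda(f_u) = \lambda$, which follows since $f_u(w_\lambda(i)) = u(i) + n > n$ precisely for $1 \le i \le k$, i.e. precisely at the positions $w_\lambda(1), \dots, w_\lambda(k)$, whose set is $\lambda$).

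Next I would handle the order-reversing property. Both $(S_{n,\lambda}, \le_k)$ and $(\cb(k,n)_\lambda, \le_\gamma)$ are graded: $S_{n,\lambda}$ by $\ell$ (as an interval in the $k$-Bruhat order), and $\cb(k,n)_\lambda$ by $\rk(f) = k(n-k) - \ell(f)$ restricted along $n$-good cover relations, noting that $n$-good covers preserve $\Lambda$ so the poset $\cb(k,n)_\lambda$ makes sense and has top element $f_{u}$ with $u = w_\lambda$ (rank $0$) and a bottom element $f_{\mathrm{min},\lambda}$. It therefore suffices to match up cover relations. A cover $u \lessdot_k v$ in $S_{n,\lambda}$ means $u = v s_{ab}$ for a transposition with $\ell(u) = \ell(v) - 1$ and $\{u(1),\dots,u(k)\} \ne \{v(1),\dots,v(k)\}$; I would translate this through the formula $f_v(w_\lambda(i)) = v(i) + n$ or $v(i)$ according as $i \le k$ or $i > k$, and show that multiplying $v$ by $s_{ab}$ on the right corresponds to multiplying $f_v = v t_k w_\lambda^{-1}$ on the right by $w_\lambda s_{ab} w_\lambda^{-1}$, which — because $a$ and $b$ lie on opposite sides of $k$ (this is forced by the condition that the top-$k$ set changes) — is exactly a reflection of the form $t_{ij}$ with $i > j$, i.e. an $n$-good cover in $\cb(k,n)$. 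The length bookkeeping $\ell(u) = \ell(v) - 1 \iff \rk(f_v) = \rk(f_u) - 1$ follows from $\ell(f_u) = k(n-k) - \ell(u)$ after one establishes $\ell(f_u) + \ell(u) = \ell(t_k) = k(n-k)$ for all $u \in S_{n,\lambda}$; this last identity is the kind of thing that should be extractable from the length-additivity properties of $t_k$ and $w_\lambda$ together with the $k$-Bruhat hypotheses, or cited from \cite{KLS}. Conversely, an $n$-good cover $g \lessdot_\gamma f$ with $g = f t_{ij}$, $i > j$, unwinds via $u_g = g w_\lambda t_k^{-1} = u_f \cdot (w_\lambda^{-1} t_{ij} w_\lambda \cdot \text{conjugated appropriately})$ to a $k$-Bruhat cover $u_g \gtrdot_k$ is wrong — rather $u_g \lessdot_k u_f$ — so the correspondence is order-reversing on covers in both directions.

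The main obstacle I anticipate is the precise reflection bookkeeping: tracking how the right-multiplication by $t_{ij}$ (an affine reflection with the window shifts by $\pm n$) conjugates through $w_\lambda$ and $t_k$ to land on an ordinary transposition $s_{ab}$ of $S_n$, and in particular verifying that the ``$i>j$'' case of $t_{ij}$ is exactly what corresponds to a transposition straddling position $k$ (equivalently, to the condition $\{u(1),\dots,u(k)\}\ne\{v(1),\dots,v(k)\}$ in the cover relation for $\le_k$). A clean way to manage this is to work positionally: use the explicit formula for $f_u(w_\lambda(i))$ to describe $f_u$ completely, observe that $f_{v s_{ab}}$ and $f_v$ differ only in the two window entries at positions $w_\lambda(a)$ and $w_\lambda(b)$, read off that this difference is precisely a $t_{ij}$-move with $\{i,j\} = \{w_\lambda(a), w_\lambda(b)\}$ and with the $i>j$ orientation dictated by which of $a,b$ is $\le k$, and finally invoke that $\cb(k,n)_\lambda$ cover relations are by definition the $n$-good $\cb(k,n)$ covers — which are exactly the $t_{ij}$ with $i > j$. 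Once the cover relations are matched and the grading identity $\ell(f_u) + \ell(u) = k(n-k)$ is in place, the anti-isomorphism is immediate.
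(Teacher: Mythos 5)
Your set-level bijection and your positional bookkeeping for the reflections (that $f_{vs_{pq}}$ and $f_v$ differ exactly in the window entries at positions $w_\lambda(p),w_\lambda(q)$, and that the straddling condition $p\le k<q$ is what produces a $t_{ij}$ with $i>j$, hence an $n$-good move) are fine and parallel the paper. The gap is in the length bookkeeping, which is where all the content of the proposition actually sits. The identity you propose, $\ell(f_u)+\ell(u)=k(n-k)$ for all $u\in S_{n,\lambda}$, is false: take $n=4$, $k=2$, $\lambda=\{2,4\}$, so $w_\lambda=[2,4,1,3]$; then $u=\mathrm{id}$ gives $f_u=[3,5,4,6]$ with $\ell(f_u)=1\neq 4$, and $u=w_\lambda$ gives $f_u=[1,6,3,8]$ with $\ell(f_u)+\ell(u)=4+3=7$. (Relatedly, $f_{w_\lambda}$ is $f_{\mathrm{min},\lambda}$, the \emph{bottom} of $\cb(k,n)_\lambda$, not its top; the anti-isomorphism sends the maximum $w_\lambda$ of $S_{n,\lambda}$ to the minimum.) The correct statement is that $\ell(f_u)-\ell(u)$ is constant on $S_{n,\lambda}$, equal to $k(n-k)-\ell(w_\lambda)$, but that is not a hypothesis you get for free: multiplying an affine permutation by a reflection can change length by $3,5,\dots$, so knowing $\ell(v)=\ell(u)+1$ does not by itself give $\ell(f_v)=\ell(f_u)+1$. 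Establishing exactly this is the heart of the paper's proof: it characterizes the affine cover condition as the absence of any window value of $f$ strictly inside the box $(j,i+n)\times(f(j),f(i)+n)$, splits the positions $j<a<i+n$ into $\{a\le n\}$ and $\{a>n\}$, and uses the anti-excedance structure of $f\in\cb(k,n)_\lambda$ to match the potentially offending entries with values $u(r)$, $p<r<q$, which are excluded by the $k$-Bruhat cover condition on $u$. Your proposal replaces this case analysis with ``extractable from length-additivity \dots or cited from \cite{KLS}''; the cited result (\cite[Theorem 3.16]{KLS}) is essentially the statement being proved, so as written the argument is circular or incomplete at precisely the step that requires work.

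If you repair the plan, the route is: prove directly (by the box/anti-excedance analysis, or some equivalent inversion count) that for $u\lessdot_k v$ in $S_{n,\lambda}$ one has $\ell(f_v)=\ell(f_u)+1$, and symmetrically for the inverse map; only then does your ``match the covers and conclude'' framework close up. Without that step the proposal does not constitute a proof.
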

\begin{proof} It is clear that $u\mapsto f_u$ and $f\mapsto u_{f}$ are inverses.
  We must show they reverse cover relations.
  Let~$u,v\in S_{n,\lambda}$ with corresponding~$f:=f_u$ and~$g:=f_v$ in
  $\cb(k,n)$. 
  The condition that~$u\lessdot_k v$ is equivalent to:
  \begin{enumerate}[label=\arabic*., ref=\arabic*]
    \item\label{prop:item1} There exists~$p\leq k<q$ such that~$v=us_{p,q}$ where~$s_{p,q}=(p,q)$ is the
      transposition swapping~$p$ and~$q$, and
    \item\label{prop:item2} $\ell(v)=\ell(u)+1$, i.e.,
      \begin{enumerate}[label=(\roman*), ref=(\roman*)]
	\item\label{prop:item2i} $u(p)<u(q)$, and
	\item\label{prop:item2ii} there is no integer~$r$ such that~$p<r<q$ and $u(p)<u(r)<u(q)$.
      \end{enumerate}
  \end{enumerate}
  On the other hand, the condition that~$g\lessdot_{\gamma}f$ is equivalent to:
  \begin{enumerate}[label=\arabic*$^{\ast}$., ref = \arabic*$^{\ast}$]
    \item\label{prop:inv1} There exists $i<j$ such that~$f(i)$ is a non-anti-excedance, $f(j)$ is
      an anti-excedance,~$g=ft_{ji}$, and
    \item\label{prop:inv2} $\ell(g)=\ell(f)+1$, i.e.,
      \begin{enumerate}[label=(\roman*), ref=(\roman*)]
	\item\label{prop:inv2i} $f(j)<f(i)+n$, and
	\item\label{prop:inv2ii} there is no integer~$a$ such that~$j<a<i+n$
	  and~$f(j)<f(a)<f(i)+n$.
      \end{enumerate}
  \end{enumerate}

  To show equivalence of these two sets of conditions, first suppose
  that~$u\lessdot_k v\leq_k w_{\lambda}$. We will show
  that~$g\lessdot_{\gamma}f$. Take~$p\leq k<q$ as in condition~\ref{prop:item1},
  and let~$i:=w_{\lambda}(q)$ and~$j:=w_{\lambda}(p)$.  It follows from
  the~$k$-Bruhat order that~$i<j$:
  \[
    i=w_{\lambda}(q)\leq v(q)=u(p)\leq v(p)\leq w_{\lambda}(p)=j.
  \]
  We have that~$f(r)=g(r)$ for~$r\in[n]\setminus\left\{i,j\right\}$, and
  \begin{align*}
    g(i)&=g(w_{\lambda}(q))=v(q)=u(p)=f(w_{\lambda}(p))-n=f(j)-n\\
    g(j)&=g(w_{\lambda}(p))=v(p)+n=u(q)+n=f(w_{\lambda}(q))+n=f(i)+n.
  \end{align*}
  Therefore, condition~\ref{prop:inv1} holds, and~\ref{prop:inv2}\ref{prop:inv2i} follows from~\ref{prop:item2}\ref{prop:item2i}.

  Condition~\ref{prop:inv2}\ref{prop:inv2ii} says that the graph of~$f$ has
  no points inside a certain box:
  \begin{center}
    \begin{tikzpicture}[scale=0.8]
      \draw (0,0) rectangle (2,2);
      \node at (0,-0.8) {$j$};
      \node at (2,-0.8) {$i+n$};
      \node at (-0.7,0) {$f(j)$};
      \node at (3,2) {$f(i)+n$};
      \node[dot] at (0,0) {};
      \node[dot] at (2,2) {};
      \draw (4.28,0.93) edge[->,bend left=15,above] node {$t_{ji}$} (5,0.95);
      \begin{scope}[xshift=8.8cm]
	\draw (0,0) rectangle (2,2);
	\node at (0,-0.8) {$j$};
	\node at (2,-0.8) {$i+n$};
	\node at (-1.7,2) {$g(j)=f(i)+n$};
	\node at (3.75,0) {$g(i+n)=f(j)$.};
	\node[dot] at (0,2) {};
	\node[dot] at (2,0) {};
      \end{scope}
    \end{tikzpicture}
  \end{center}
  To verify condition~\ref{prop:inv2}\ref{prop:inv2ii} holds, it helps to
  divide the sequence of integers $j,j+1,\ldots,i+n$ into two parts:
  $X:=\left\{a \in\Z: j<a\leq n\right\}$, and $Y:=\left\{a\in\Z:n<a<
  i+n\right\}$.  If~$a\in X$ and~$f(a)$ is not an anti-excedance, then
  $f(a)<n<f(j)$ and, hence, condition~\ref{prop:inv2}\ref{prop:inv2ii} is not
  violated.  Similarly, if~$a\in Y$ and~$f(a-n)$ \emph{is} an
  anti-excedance, then~$f(i)+n\leq 2n<f(a-n)+n=f(a)$ and,
  hence,~\ref{prop:inv2}\ref{prop:inv2ii} is again not violated.

  It remains to check anti-excedances whose positions are in~$X$ and
  non-anti-excedances whose positions are between~$1$ and~$i$ (i.e., are
  in~$-n+Y$). Take~$a\in X$ and suppose that~$f(a)$ is an anti-excedance.
  Since~$a>j=w_{\lambda}(p)$, there exists~$r$ with $p<r\leq k<q$ such that
  $f(a)=u(r)+n$.  By condition~\ref{prop:item2}\ref{prop:item2ii},~$u(r)$ is
  not between~$u(p)$ and~$u(q)$, which implies that~$f(a)$ is not
  between~$f(i)+n=u(q)+n$ and~$f(j)=u(p)+n$ in accordance with~\ref{prop:inv2}\ref{prop:inv2ii}.  Now,
  instead, take~$a\in Y$ and suppose that~$f(a-n)$ is not an anti-excedance.
  Since~$a-n<i=w_{\lambda}(q)$, there exists~$r$ with~$p\leq k<r<q$ such that $f(a-n)=u(r)$.  As
  above,~$u(r)$ is not between~$u(p)$ and~$u(q)$, and this implies that
  $f(a)=u(r)+n$ is not between~$f(i)+n=u(q)+n$ and~$f(j)=u(p)+n$.

  We have shown that the mapping $u\mapsto f_u$ reverses cover relations.
  The proof that its inverse~$f\mapsto u_f$ reverses cover relations is similar.
\end{proof}

\begin{cor}\label{cor:chains} Let~$f\in \cb(k,n)$ with corresponding Grassmannian
  permutation~$w_f$.  Then the~$n$-good downward-saturated chains in~$\cb(k,n)$
  with maximal element~$f$ are in bijection with the maximal chains in
  the~$k$-Bruhat interval~$[u_f,w_f]_k$.  
\end{cor}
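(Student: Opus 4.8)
The plan is to deduce this directly from the poset anti-isomorphism of Proposition~\ref{prop:anti-isomorphism}. Set $\lambda:=\Lambda(f)$, so that $f\in\cb(k,n)_\lambda$ and $w_f=w_\lambda$. The starting point is the observation recorded just after the definition of $\leq_\gamma$: a cover $g\lessdot h$ in $\cb(k,n)$ is $n$-good exactly when it has the form $ht_{ij}\lessdot h$ with $i>j$, and then $\Lambda(g)=\Lambda(h)$; equivalently, the $n$-good covers with top in $\cb(k,n)_\mu$ are precisely the covers in $(\cb(k,n)_\mu,\leq_\gamma)$. Hence any $n$-good downward-saturated chain $f=g_0\gtrdot g_1\gtrdot\cdots\gtrdot g_m$ lies entirely inside $\cb(k,n)_\lambda$, and each of its steps is a $\leq_\gamma$-cover.

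Next I would identify the two endpoints. Being downward-saturated, $g_m$ is a minimal element of $\cb(k,n)$; the minimal elements are the $f_{\mathrm{min},\mu}$ with $\Lambda(f_{\mathrm{min},\mu})=\mu$, so $\Lambda(g_m)=\lambda$ forces $g_m=f_{\mathrm{min},\lambda}$. A one-line check from the formula for $u_f$ shows $u_{f_{\mathrm{min},\lambda}}=w_\lambda$: for $1\leq i\leq k$ we have $w_\lambda(i)\in\lambda$, hence $f_{\mathrm{min},\lambda}(w_\lambda(i))=w_\lambda(i)+n$; for $k<i\leq n$ we have $w_\lambda(i)\notin\lambda$, hence $f_{\mathrm{min},\lambda}(w_\lambda(i))=w_\lambda(i)$; in both cases $u_f(i)=w_\lambda(i)$. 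Since $u\mapsto f_u$ reverses order and $u_f\leq_k w_\lambda$ (shown just before Proposition~\ref{prop:anti-isomorphism}), this gives $f_{\mathrm{min},\lambda}\leq_\gamma f$, and the previous paragraph shows that the $n$-good downward-saturated chains with maximal element $f$ are exactly the saturated chains from $f_{\mathrm{min},\lambda}$ to $f$ in $(\cb(k,n)_\lambda,\leq_\gamma)$; conversely, any such $\leq_\gamma$-chain is automatically $n$-good (each step is an $n$-good cover by definition of $\leq_\gamma$) and downward-saturated (it terminates at the minimal element $f_{\mathrm{min},\lambda}$ of $\cb(k,n)$).

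Finally I would apply Proposition~\ref{prop:anti-isomorphism}. Since $f_{u_f}=f$ and $f_{w_\lambda}=f_{\mathrm{min},\lambda}$, the anti-isomorphism $u\mapsto f_u$ restricts to an anti-isomorphism from the $k$-Bruhat interval $[u_f,w_\lambda]_k$ onto the interval $[f_{\mathrm{min},\lambda},f]$ in $(\cb(k,n)_\lambda,\leq_\gamma)$. An anti-isomorphism of finite posets carries maximal chains to maximal chains (reversing the order of their elements), so the maximal chains of $[u_f,w_f]_k=[u_f,w_\lambda]_k$ are in bijection with the saturated $\leq_\gamma$-chains from $f_{\mathrm{min},\lambda}$ to $f$, which by the previous paragraph are precisely the $n$-good downward-saturated chains in $\cb(k,n)$ with maximal element $f$. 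There is no genuine obstacle beyond Proposition~\ref{prop:anti-isomorphism} itself; the only point needing care is the bookkeeping that $n$-goodness, being preserved along an entire chain, confines the chain to a single $\cb(k,n)_\lambda$ and forces it to descend all the way to $f_{\mathrm{min},\lambda}$.
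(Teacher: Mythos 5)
Your proof is correct and follows essentially the same route as the paper: restrict to $\cb(k,n)_{\lambda}$ using the fact that $n$-good covers preserve anti-excedance positions, then transport chains through the anti-isomorphism of Proposition~\ref{prop:anti-isomorphism}. The only difference is cosmetic---you identify the bottom of the chain explicitly as $f_{\mathrm{min},\lambda}$ and check $f_{w_{\lambda}}=f_{\mathrm{min},\lambda}$, whereas the paper makes the equivalent observation on the other side, namely that $S_{n,\lambda}$ has unique maximal element $w_{\lambda}=w_f$.
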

\begin{proof} Let~$\lambda=\Lambda(f)$. Since an $n$-good covering preserves
  anti-excedance positions, downward-saturated $n$-good chains in $\cb(k,n)$
  with maximal element~$f$ are exactly downward saturated chains
  in~$\cb(k,n)_{\lambda}$ with maximal element~$f$.  By
  Proposition~\ref{prop:anti-isomorphism} these are in bijection with upward
  saturated chains in $S_{n,\lambda}$ with minimal element~$u_f$.  Since
  $S_{n,\lambda}$ has unique maximal element~$w_{\lambda}=w_{f}$, these
  upward-saturated chains are exactly the maximal chains in~$[u_f,w_f]_{k}$.
\end{proof}

\begin{example}
  Let~$f=[2,5,4,7]\in\cb(2,4)$.  Then~$w_f=[2,4,1,3]\in S_n$.  We
  have $u_f=[1,3,2,4]$ and $f_{w_{f}}=[1,6,3,8]$.  As seen in
  Figure~\ref{fig:intervals}, the interval~$[u_f,w_f]_2$ has
  two maximal chains:
  \[
    [1,3,2,4]\lessdot_2[1,4,2,3]\lessdot_2[2,4,1,3]\quad\text{and}\quad[1,3,2,4]\lessdot_2[2,3,1,4]\lessdot_2[2,4,1,3].
  \]
  Under the isomorphism of Proposition~\ref{prop:anti-isomorphism}, these
  correspond to the two~$n$-good downward-saturated chains in~$\cb(2,4)$ with
  maximal element~$f$:
  \[
    [1,6,3,8]\lessdot[2,5,3,8]\lessdot[2,5,4,7]\quad\text{and}\quad[1,6,3,8]\lessdot[1,6,4,7]\lessdot[2,5,4,7].
  \]
\end{example}

\begin{figure}[ht]
  \centering
  \begin{center}
    \begin{tikzpicture}
      \def\x{2}
      \def\y{1.3}
      \node at (0,0) (11) {$[2,4,1,3]$};

      \node at (-\x,-\y) (21) {$[1,4,2,3]$};
      \node at (0*\x,-\y) (22) {$[2,1,4,3]$};
      \node at (\x,-\y) (23) {$[2,3,1,4]$};

      \node at (-\x,-2*\y) (31) {$[1,2,4,3]$};
      \node at (0*\x,-2*\y) (32) {$[1,3,2,4]$};

      \node at (0,-3*\y) (41) {$[1,2,3,4]$};

      \draw (11)--(21);
      \draw (11)--(22);
      \draw (11)--(23);

      \draw (21)--(31);
      \draw (21)--(32);
      \draw (23)--(32);

      \draw (32)--(41);

      \begin{scope}[xshift=7cm]
	\node at (0,0) (11) {$[3,5,4,6]$}; 

	\node at (-\x,-\y) (21) {$[4,5,3,6]$};
	\node at (0,-\y) (22) {$[2,5,4,7]$};

	\node at (-\x,-2*\y) (31) {$[2,5,3,8]$};
	\node at (0,0-2*\y) (32) {$[4,6,3,5]$};
	\node at (\x,0-2*\y) (33) {$[1,6,4,7]$};

	\node at (0,-3*\y) (41) {$[1,6,3,8]$};

       \draw (11)--(22);

       \draw (21)--(31);
       \draw (22)--(31);
       \draw (22)--(33);

       \draw (31)--(41);
       \draw (32)--(41);
       \draw (33)--(41);
      \end{scope}
    \end{tikzpicture}
  \end{center}
  
  \caption{The posets~$S_{4,\lambda}$ and $\cb(2,4)_{\lambda}$ in the
  case~$\lambda=\left\{2,4\right\}\in\binom{[4]}{2}$. They are anti-isomorphic
in accordance with Proposition~\ref{prop:anti-isomorphism}.}
  \label{fig:intervals}
\end{figure}
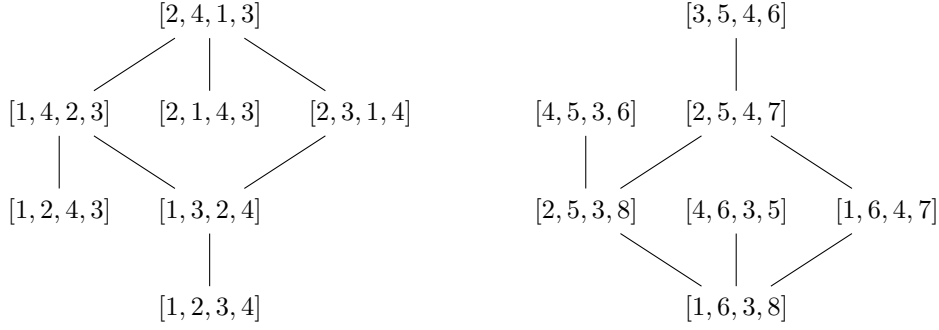

\subsection{Cyclic shifts} 
Define the \emph{cyclic shift} of~$f\in \cb(k,n)$ by
\[
  \chi(f) := [f(0)+1,f(1)+1,\ldots,f(n-1)+1].
\]
The following properties of~$\chi$ are immediate: (i)~$i\leq
\chi(f)(i)\leq i+n$ for all~$i\in[n]$, (ii)~$\chi(ft_{ij})=\chi(f)t_{i+1,j+1}$
for all reflections~$t_{i,j}$ (with indices taken modulo~$n$),  and
(iii)~$(i,j)\in\Z^{2}$ represents an inversion for~$f$ if and only
if~$(i+1,j+1)$ represents an inversion for~$\chi(f)$.  Therefore,~$\chi$ is an
automorphism of the graded poset~$\cb(k,n)$ and induces a faithful action of the
cyclic group of order~$n$ on~$\cb(k,n)$.  The following is an immediate
implication of property (ii).

\begin{prop}\label{prop:cyclic shift}
  A saturated chain~$C$ in~$\cb(k,n)$ is~$r$-good if and only if~$\chi(C)$
  is~$(r+1)$-good.
\end{prop}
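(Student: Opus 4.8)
The plan is to reduce the statement to the level of individual cover relations and then read off the conclusion from property~(ii) of~$\chi$. Since $\chi$ is an automorphism of the graded poset $\cb(k,n)$, it carries each saturated chain $C=(f_0\lessdot f_1\lessdot\cdots\lessdot f_m)$ to the saturated chain $\chi(C)=(\chi(f_0)\lessdot\chi(f_1)\lessdot\cdots\lessdot\chi(f_m))$, matching up cover relations one for one. A chain is $r$-good precisely when every one of its cover relations is, so it suffices to prove the single-step claim: a covering $g\lessdot f$ in $\cb(k,n)$ is $r$-good if and only if $\chi(g)\lessdot\chi(f)$ is $(r+1)$-good. Applying this to each cover relation of $C$ then gives the proposition.

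For the single-step claim, write the covering as $g=ft_{ij}\lessdot f$ with $i,j\in[n]$ and $i\neq j$. By property~(ii), $\chi(g)=\chi(ft_{ij})=\chi(f)\,t_{i+1,j+1}$ with the indices $i+1,j+1$ reduced mod~$n$ to representatives in $[n]$, so the image covering is $\chi(f)\,t_{i+1,j+1}\lessdot\chi(f)$. Now compare weights using Definition~\ref{def:weights}: in both cases $i<j$ and $i>j$, the set of $r\in[n]$ for which $ft_{ij}\lessdot f$ is $r$-good is exactly the cyclic interval of subscripts $\{i,i+1,\ldots,j-1\}\pmod n$ appearing in $\wt(ft_{ij}\lessdot f)$, and likewise the set for which $\chi(f)\,t_{i+1,j+1}\lessdot\chi(f)$ is good is the cyclic interval $\{i+1,\ldots,j\}\pmod n$. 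The second interval is the first shifted by~$1$, so $r$ lies in the first if and only if $r+1$ lies in the second; this is precisely the assertion that $ft_{ij}\lessdot f$ is $r$-good iff its $\chi$-image is $(r+1)$-good. For arbitrary $r\in\Z$ one replaces $r$ by its representative mod~$n$, and the replacement is compatible with the shift $r\mapsto r+1$.

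There is no genuine obstacle here; the argument is entirely bookkeeping with cyclic indices, and the only point that needs care is the observation that the weight of a covering $ft_{ij}\lessdot f$ records exactly the cyclic interval of subscripts running from $i$ to $j-1$. Granting that, applying $\chi$ shifts $t_{ij}$ to $t_{i+1,j+1}$ and hence shifts this interval by exactly~$1$, which is what produces the passage from $r$-good to $(r+1)$-good.
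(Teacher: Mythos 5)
Your argument is correct and is exactly the paper's: the paper declares the proposition an ``immediate implication of property (ii)'' of~$\chi$, and your write-up simply fills in the same bookkeeping, reducing to a single cover $ft_{ij}\lessdot f$ and noting that $\chi$ sends it to $\chi(f)t_{i+1,j+1}$, shifting the cyclic interval of weight subscripts by one.
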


The following result of Bergeron and Sottile is an important step in the proof
of Lemma~\ref{lemma:independence}.

\begin{thm}[{\cite[Corollary 1.3.1]{BS}}]\label{thm:BS}
  Let~$u\leq_k v$ and~$x\leq_k y$ in the~$k$-Bruhat order on~$S_n$,
  and suppose that~$cvu^{-1}c^{-1}=yx^{-1}$
  where~$c=[2,3,\ldots,n,1]=(1,2,\ldots,n)$. Then
  the intervals~$[u,v]_k$ and~$[x,y]_k$ have the same number of maximal chains.
\end{thm}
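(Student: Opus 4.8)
The plan is to recast the chain count as a Schubert-calculus intersection number in the cohomology of the complete flag variety $Fl_n = GL_n(\C)/B$, and then to obtain the equality from the geometric symmetry of the associated structure constants. I would first translate counting into multiplication by a divisor. Let $\sigma_w \in H^{2\ell(w)}(Fl_n)$ be the Schubert class indexed by $w$, let $w_0$ be the longest element, and set $\tau = (k,k+1)$. Monk's formula gives $\sigma_\tau \cdot \sigma_w = \sum \sigma_{w t_{ab}}$, summed over transpositions $t_{ab}$ with $a \le k < b$ and $\ell(w t_{ab}) = \ell(w)+1$; these are exactly the $k$-Bruhat covers $w \lessdot_k w t_{ab}$. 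Iterating $m := \ell(v) - \ell(u)$ times, the coefficient of $\sigma_v$ in $\sigma_\tau^{\,m} \cdot \sigma_u$ is the number of length-$m$ saturated $k$-Bruhat chains from $u$ to $v$, which is the number of maximal chains in $[u,v]_k$. By Poincar\'e duality,
\[
  \#\{\text{maximal chains in } [u,v]_k\} \;=\; \int_{Fl_n} \sigma_u \cdot \sigma_\tau^{\,m} \cdot \sigma_{w_0 v},
\]
and likewise for $[x,y]_k$. Counting chains is thus a question about these structure constants.

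The theorem then reduces to a symmetry statement: the displayed intersection number is unchanged when the ``difference'' $vu^{-1}$ is replaced by its cyclic conjugate $c\,(vu^{-1})\,c^{-1} = yx^{-1}$. I would prove this via the geometric symmetry of Littlewood--Richardson coefficients found by Bergeron and Sottile, realizing the intersection number through Schubert varieties defined relative to a reference flag $F_\bullet$ and its rotation $c \cdot F_\bullet$, and matching the two configurations. A warning is built into the statement: conjugation by $c$ does \emph{not} preserve the $k$-Bruhat order, so $[u,v]_k$ and $[x,y]_k$ are in general non-isomorphic posets, and one must genuinely count rather than exhibit a poset isomorphism. (Note also that the hypothesis forces the two intervals to have equal rank, since for a $k$-Bruhat pair the difference is length-additive and the cyclic conjugation preserves the relevant length, so $m$ is the same on both sides.)

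The hard part is exactly this cyclic symmetry of the structure constants. It cannot come from the evident automorphism of $Fl_n$ induced by the permutation matrix of $c$: that automorphism is left translation by an element of the connected group $GL_n(\C)$, hence is homotopic to the identity and acts trivially on $H^*(Fl_n)$, so each $\sigma_w$ is fixed and no relation among Schubert classes results. The genuine symmetry therefore requires the finer geometric comparison of Schubert varieties for cyclically rotated flags that underlies Bergeron and Sottile's proof of the Littlewood--Richardson symmetry. Supplying a purely combinatorial argument for this invariance of the chain count — which would remove the geometric input entirely — is the natural next step, and remains open, as noted in the introduction.
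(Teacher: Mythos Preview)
The paper does not supply its own proof of this statement: Theorem~\ref{thm:BS} is quoted from Bergeron--Sottile with citation, and the introduction summarizes it in one line as ``a consequence of a symmetry they find for Littlewood--Richardson coefficients using geometry.'' So there is no in-paper proof to compare against beyond that description.

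Your proposal accurately reconstructs the shape of the Bergeron--Sottile argument as the paper describes it. The Monk reduction is correct: multiplying by $\sigma_\tau$ with $\tau=(k,k+1)$ does pick out exactly the $k$-Bruhat covers, so iterating $m$ times and reading off the coefficient of $\sigma_v$ counts maximal chains in $[u,v]_k$, and the Poincar\'e-duality rewriting as an intersection number is valid. Your observation that left translation by the permutation matrix of $c$ acts trivially on $H^*(Fl_n)$, and hence cannot by itself deliver the symmetry, is also correct and a useful warning.

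However, the proposal is not a self-contained proof. At the decisive step --- invariance of the intersection number under replacing $vu^{-1}$ by $c\,vu^{-1}\,c^{-1}$ --- you write that you ``would prove this via the geometric symmetry of Littlewood--Richardson coefficients found by Bergeron and Sottile,'' which is precisely the content of the cited corollary. So what you have is a faithful outline of the strategy, matching the paper's one-sentence summary, but the hard part is deferred back to the source being cited. You are candid about this in your final paragraph, and your closing remark that a purely combinatorial proof remains open is exactly what the paper's introduction says as well.
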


\subsection{Proofs of lemmas}
\begin{proof}[Proof of Lemma~\ref{lemma:independence}]  For general~$g\in\cb(k,n)$,
  let $\delta_r(g)$ denote the number of $r$-good downward-saturated chains
  in~$\cb(k,n)$ with
  maximal element~$g$.  Fix $f\in\cb(k,n)$.  We
  first show that Theorem~\ref{thm:BS} applies to the pair of
intervals~$[u_f,w_f]_k$ and~$[u_{\chi(f)},w_{\chi(f)}]_k$ by checking
  that~$cu_fw_f^{-1}=u_{\chi(f)}w_{\chi(f)}^{-1}c$ where~$c=[2,3,\ldots,n,1]$.
  Let~$i\in[n]$.  Working modulo~$n$, 
  \[
    cu_fw_f^{-1}(i)=cu_ft_kw_f^{-1}(i)=f(i)+1=\chi(f)(i+1)=u_{\chi(f)}t_kw_{\chi(f)}^{-1}c(i)=u_{\chi(f)}w_{\chi(f)}^{-1}c(i),
  \]
  and the conclusion follows.  Therefore, the number of maximal chains
  in~$[u_f,w_f]_k$ is the same as the number of maximal chains in
  $[u_{\chi(f)},w_{\chi(f)}]_k$ and, by induction, as the number of maximal
  chains in $[u_{\chi^r(f)},w_{\chi^r(f)}]_k$ for all~$r\in[n]$.  
  Applying Corollary~\ref{cor:chains} and Proposition~\ref{prop:cyclic shift},
  \[
    \delta_n(f)=\delta_n(\chi^r(f))=\delta_r(f)
  \]
  for all~$r\in[n]$.
\end{proof}

\begin{proof}[Proof of Lemma~\ref{lemma:count}]
  Let~$w_{\mathrm{max}}=w_{\{n-k+1,n-k+2,\ldots,n\}}$ and
  $\mathrm{id}=[1,2,\ldots,n]$. Then~$w\leq_{k}w_{\mathrm{max}}$ for
  all~$k$-Grassmannian permutations~$w$ and~$\mathrm{id}\leq_{k}u$ for all~$u\in
  S_n$.  Thus, by Corollary~\ref{cor:chains}, maximal~$n$-good chains in
  $\cb(k,n)$ are in correspondence with maximal chains
  in~$[\mathrm{id},w_{\mathrm{max}}]_{k}$, an interval of rank~$k(n-k)$. This
  interval is exactly the set of all~$k$-Grassmannian elements of~$S_n$.

  Let~$L(k,n-k)$ denote the poset of Young diagrams fitting inside a~$k\times
  (n-k)$ rectangle, ordered by containment, as usual.  There is a well-known
  correspondence between~$\binom{[n]}{k}$ and~$L(k,n-k)$:
  given~$\lambda\in\binom{[n]}{k}$
  with~$\lambda=\left\{\lambda_1<\cdots<\lambda_k\right\}$, let~$Y_{p(\lambda)}$
  be the Young diagram corresponding to the partition~$p(\lambda)=\{
  p_1>\cdots>p_k\}$ where~$p_i:=(n-k)-\lambda_i+i$. In English
  notation,~$Y_{p(\lambda)}$ is the diagram determined by the left-down walk
  in~$\Z^2$ from~$(k,n-k)$ to~$(0,0)$ whose $\lambda_i$-th step is its $i$-th
  \emph{vertical} step.  This correspondence yields an
  anti-isomorphism~$w_{\lambda}\mapsto Y_{p(\lambda)}$ from the
  interval~$[\mathrm{id},w_{\mathrm{max}}]_k$ with its $k$-Bruhat order and
  $L(k,n-k)$.  The result now follows since Young tableaux for
  the~$k\times(n-k)$ rectangle are in bijection with maximal chains in
  $L(k,n-k)$. 
\end{proof}

\bibliographystyle{alpha}
\bibliography{cb}

\end{document}